\newtheorem{thm}{Theorem}[section]
\newtheorem{lem}[thm]{Lemma}
\newtheorem{prop}[thm]{Proposition}
\theoremstyle{definition}
\newtheorem{defn}[thm]{Definition}
\theoremstyle{remark}
\newtheorem{rem}[thm]{\bf Remark}
\newtheorem{exm}[thm]{\bf Example}
\begin{document}

\title{algebra extensions and derived-discrete algebras}

\author{Jie Li}

\subjclass[2010]{16G10, 16E35}
\keywords{derived-discrete algebra, split extension, separable extension.}
\thanks{E-mail: lijie0$\symbol{64}$mail.ustc.edu.cn}

\begin{abstract}
	Let $\phi\colon A\rightarrow B$ be an algebra homomorphism between finite dimensional algebras. We prove that if $\phi$ is split, the derived-discreteness of $A$ implies the derived-discreteness of $B$; if $\phi$ is separable and the right $A$-module $B_A$ is projective, the converse holds. We prove an analogous statement for piecewise hereditary algebras. 
\end{abstract}

\maketitle

\section{introduction}
The notion of a derived-discrete algebra is introduced by \cite{V}. This class of algebras plays a special role in the representation theory of algebras, since their derived categories are accessible \cite{NPP}. The derived-discrete algebras over an algebraically closed field were classified by \cite{V} up to Morita equivalence, and by \cite{BGS} up to derived equivalence. 

We plan to classify derived-discrete algebras over an arbitrary infinite field. One approach is to investigate the relation between derived-discreteness and field extensions. More generally, we study the relation between derived-discreteness and algebra extensions.

Let us describe our main results. Assume that $k$ is an infinite field, and that $A$ and $B$ are finite dimensional algebras over $k$. Let $\phi\colon A\rightarrow B$ be an algebra extension, that is, an $k$-algebra homomorphism. We prove that if $\phi$ is a split extension, the derived-discreteness of $B$ implies the derived-discreteness of $A$; if $\phi$ is a separable extension and the right $A$-module $B_A$ is projective, the derived-discreteness of $A$ implies the derived-discreteness of $B$; see Theorem \ref{main}. The first statement strengthens \cite[3.3 Proposition]{V}. The condition that $B_A$ is projective as a right $A$-module in the second statement is necessary; see Example \ref{cntemp}. We prove analogous statements for piecewise hereditary algebras; see Proposition \ref{ph}. 

Recall from \cite[2.1 Theorem]{V} the classification of derived-discrete algebras: a connected algebra $A$ over an algebraically closed field $k$ is derived-discrete if and only if $A$ is either piecewise hereditary of Dynkin type or Morita equivalent to a gentle one-cycle algebra with the clock condition. As an application of the above results, we prove that the dichotomic classification of derived-discrete algebras is compatible with skew group algebra extensions; see Proposition~\ref{last}.

Throughout, we fix an infinite field $k$, not algebraically closed in general. We require that all the algebras are finite dimensional over $k$, and all the functors are $k$-linear.

\section{Derived-discrete algebras}
In this section, we recall derived-discrete algebras. Denote by $\mathbb{N}^{(\mathbb{Z})}$ the set of vectors $\underline{n}=(n_i)_{i\in\mathbb{Z}}$ of natural numbers with only finitely many nonzero entries.

Let $A$ be a finite dimensional algebra over $k$. Denote by $A\mbox{-mod}$ the abelian category of finitely generated left $A$-modules. Let
$\mathbf{D}^b(A\mbox{-mod})$ be its bounded derived category. 
For each $X$ in $\mathbf{D}^b(A\mbox{-{\rm mod}})$, let $$\underline{\dim}_kX=(\dim_kH^i(X))_{i\in\mathbb{Z}}\in{\mathbb{N}^{(\mathbb{Z})}}$$ be its \textit{cohomology dimension vector}. Denote by $[X]$ the isomorphism class of $X$ in $\mathbf{D}^b(A\mbox{-{\rm mod}})$.

\begin{defn}
	A finite dimensional $k$-algebra $A$ is called \textbf{derived-discrete} over $k$, if for any vector $\underline{n}=(n_i)_{i\in\mathbb{Z}}$ in $\mathbb{N}^{(\mathbb{Z})}$, $$\{[X]\in\mathbf{D}^b(A\mbox{-{\rm mod}})\,|\,\underline{\dim}_kX=\underline{n}\}$$ is a finite set.
\end{defn}

The lemma below shows that the definition above is equivalent to the one in \cite[1.1]{V}. Denote by $K_0(A)$ the Grothendieck group of $A$-mod. For each $X$ in $\mathbf{D}^b(A\mbox{-{\rm mod}})$, set $K\underline{\dim} X=(\textbf{m}_i)_{i\in\mathbb{Z}}\in K_0(A)^{(\mathbb{Z})}$, where $\textbf{m}_i$ is the dimension vector of $H^i(X)$.

\begin{lem}
	A finite dimensional $k$-algebra $A$ is derived-discrete if and only if for each $(\textbf{m}_i)_{i\in\mathbb{Z}}\in K_0(A)^{(\mathbb{Z})}$, 
	$$\{[X]\in\mathbf{D}^b(A\mbox{-{\rm mod}})\,|\,\mbox{ X is indecomposable with }K\underline{\dim} X=(\textbf{m}_i)_{i\in\mathbb{Z}}\}$$ is a finite set.
	%up to isomorphism there admit finitely many indecomposable objects $X$ in $\mathbf{D}^b(A\mbox{-{\rm mod}})$ such that the dimension vector of $H^i(X)$ equals $\textbf{m}_i$, $\forall i\in\mathbb{Z}$.
\end{lem}	
\begin{proof}
	For each $(\textbf{m}_i)_{i\in\mathbb{Z}}\in K_0(A)^{(\mathbb{Z})}$, let $n_i$ be the total dimension of $\textbf{m}_i$. For each object $X$ in $\mathbf{D}^b(A\mbox{-{\rm mod}})$ such that $H^i(X)=\textbf{m}_i$, $\forall i\in\mathbb{Z}$, we have $\underline{\dim}_kX=(n_i)_{i\in\mathbb{Z}}=\underline{n}$. Hence the ``only if'' part holds.
		
	Conversely, we assume that $$\{[X]\in\mathbf{D}^b(A\mbox{-{\rm mod}})\,|\,\mbox{ X is indecomposable with }K\underline{\dim} X=(\textbf{m}_i)_{i\in\mathbb{Z}}\}$$ is a finite set for each $(\textbf{m}_i)_{i\in\mathbb{Z}}$. For each $(n_i)_{i\in\mathbb{Z}}\in\mathbb{N}^{(\mathbb{Z})}$, there are only finitely many $(\textbf{m}_i)_{i\in\mathbb{Z}}\in K_0(A)^{(\mathbb{Z})}$ such that the total dimension of $\textbf{m}_i$ equals $n_i$. Hence  $$\{[X]\in\mathbf{D}^b(A\mbox{-{\rm mod}})\,|\,X \mbox{ is indecomposable with }\underline{\dim}_kX=\underline{n}\}$$ is a finite set. Since $\mathbf{D}^b(A\mbox{-{\rm mod}})$ is Krull-Schmidt, the ``if'' part holds.
\end{proof}

\begin{lem}
	Let $K/k$ be a finite field extension and $A$ be a finite dimensional $K$-algebra. Then $A$ is derived-discrete over $K$ if and only if it is derived-discrete over $k$.
\end{lem}
\begin{proof}
	Assume that $K/k$ is a finite field extension of degree $l$. For each $X\in\mathbf{D}^b(A\mbox{-{\rm mod}})$, we have $$\underline{\dim}_kX=(\dim_kH^i(X))_{i\in\mathbb{Z}}=(l\cdot\dim_KH^i(X))_{i\in\mathbb{Z}}=l\cdot\underline{\dim}_KX.$$ Hence for each $\underline{n}=(n_i)_{i\in\mathbb{Z}}$ in $\mathbb{N}^{(\mathbb{Z})}$, $$(\divideontimes)\mbox{ }\{[X]\in\mathbf{D}^b(A\mbox{-{\rm mod}})\,|\,\underline{\dim}_KX=\underline{n}\}=\{[X]\in\mathbf{D}^b(A\mbox{-{\rm mod}})\,|\,\underline{\dim}_kX=l\cdot\underline{n}\}. $$ Therefore, $A$ is derived-discrete over $K$ if it is derived-discrete over $k$. 
	
	Conversely, for each $\underline{m}=(m_i)_{i\in\mathbb{Z}}$ in $\mathbb{N}^{(\mathbb{Z})}$, the set $$\{[X]\in\mathbf{D}^b(A\mbox{-{\rm mod}})\,|\,\underline{\dim}_kX=\underline{m}\}$$ is non-empty only when $m_i$ is divisible by $l$ for any $i\in\mathbb{Z}$. By $(\divideontimes)$ again, $A$ is derived-discrete over $k$ if it is derived-discrete over $K$. 
\end{proof}
 
Denote by $\mathbf{K}^-(A{\mbox{-\rm proj}})$ the homotopy category of bounded-above complexes of finitely generated projective left $A$-modules. Let $\mathbf{K}^b(A{\mbox{-\rm proj}})$ (\emph{resp}. $\mathbf{K}^{-,b}(A{\mbox{-\rm proj}})$) be its full subcategory consisting of bounded complexes (\emph{resp}. complexes with bounded cohomologies). There is a well-known triangle equivalence $$p\colon \mathbf{D}^b(A{\mbox{-\rm mod}})\longrightarrow\mathbf{K}^{-,b}(A{\mbox{-\rm proj}}),$$ sending $X$ to its projective resolution $pX$. For each $P$ in $\mathbf{K}^-(A{\mbox{-\rm proj}})$, by $P_{\geq t}$ in $\mathbf{K}^b(A{\mbox{-\rm proj}})$ we denote the brutal truncation of $X$ at degree $t$.

\begin{lem}\label{iso}
	Let $X$, $Y$ be in $\mathbf{D}^b(A\mbox{-{\rm mod}})$, and $t$ be an integer such that $H^{i}(X)=H^{i}(Y)=0$ whenever $i\leq t$. Then $(pX)_{\geq t}\simeq (pY)_{\geq t}$ in $\mathbf{K}^b(A{\mbox{-\rm proj}})$ implies that $X\simeq Y$ in $\mathbf{D}^b(A\mbox{-{\rm mod}})$.
\end{lem}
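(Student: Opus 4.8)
The plan is to recover $X$ and $Y$ \emph{canonically} from the brutal truncations $(pX)_{\geq t}$ and $(pY)_{\geq t}$, and then to transport the given isomorphism across these recoveries using functoriality. Throughout I identify $\mathbf{K}^b(A\mbox{-proj})$ with a full triangulated subcategory of $\mathbf{D}^b(A\mbox{-mod})$, so that the hypothesis $f\colon (pX)_{\geq t}\simeq(pY)_{\geq t}$ becomes an isomorphism in $\mathbf{D}^b(A\mbox{-mod})$. Writing $\tau_{\geq t+1}$ for the canonical (good) truncation functor on $\mathbf{D}^b(A\mbox{-mod})$, the central assertion I would establish is a natural isomorphism $X\simeq\tau_{\geq t+1}\big((pX)_{\geq t}\big)$, and likewise $Y\simeq\tau_{\geq t+1}\big((pY)_{\geq t}\big)$. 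Granting this, the conclusion is purely formal: since $\tau_{\geq t+1}$ is a functor it sends $f$ to an isomorphism $\tau_{\geq t+1}\big((pX)_{\geq t}\big)\simeq\tau_{\geq t+1}\big((pY)_{\geq t}\big)$, and chaining with the two recovery isomorphisms yields $X\simeq Y$.

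To prove the central assertion I would first compute the cohomology of the brutal truncation. Since $pX$ is bounded above with $H^i(pX)=H^i(X)$, and $(pX)_{\geq t}$ coincides with $pX$ in all degrees $\geq t$, one gets $H^i\big((pX)_{\geq t}\big)=H^i(X)$ for every $i\geq t+1$, whereas in degree $t$ there is no incoming differential, so $H^t\big((pX)_{\geq t}\big)=\ker(d^t)$, with $d^t$ the differential of $pX$; and $H^i=0$ for $i<t$. I would then exhibit two canonical quasi-isomorphisms. The inclusion of the brutal truncation as a subcomplex, $\iota\colon(pX)_{\geq t}\hookrightarrow pX$, induces $\tau_{\geq t+1}(\iota)\colon\tau_{\geq t+1}\big((pX)_{\geq t}\big)\to\tau_{\geq t+1}(pX)$, which is an isomorphism because $\iota$ is the identity on $H^i$ for $i\geq t+1$ while $\tau_{\geq t+1}$ discards all lower cohomology, including the spurious term $\ker(d^t)$. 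On the other side, the hypothesis $H^i(X)=0$ for $i\leq t$ makes the canonical map $pX\to\tau_{\geq t+1}(pX)$ a quasi-isomorphism, so $X\simeq pX\simeq\tau_{\geq t+1}(pX)$. Composing these gives the desired isomorphism $X\simeq\tau_{\geq t+1}\big((pX)_{\geq t}\big)$.

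The step I expect to be the crux is precisely this recovery, and within it the handling of degree $t$: the brutal truncation manufactures a cohomology group $\ker(d^t)$ in degree $t$ that is absent from $X$, and the whole role of $\tau_{\geq t+1}$ is to delete it while leaving the higher cohomology untouched; the vanishing hypothesis $H^i=0$ for $i\leq t$ enters exactly at the two points above, ensuring that $\tau_{\geq t+1}(\iota)$ loses no information and that $pX\to\tau_{\geq t+1}(pX)$ is a quasi-isomorphism. I would deliberately route the argument through $\tau_{\geq t+1}$ applied to $f$, rather than attempting to lift $f$ by hand to a chain isomorphism $pX\to pY$: the latter would force a reconstruction of the resolutions in degrees below $t$—which are projective resolutions of the common cocycle $\ker(d^t)$—together with a compatibility check on the gluing data, a considerably more laborious route that the functorial formulation avoids entirely.
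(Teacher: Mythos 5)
Your proof is correct, and it reaches the conclusion by a genuinely different mechanism than the paper. The paper repairs the brutal truncation \emph{from below}: it forms $\overline{pX}$ by adjoining ${\rm Ker}\,d^t_{pX}$ in degree $t-1$ of $(pX)_{\geq t}$, notes that the given homotopy equivalence $(f^i)$ restricts to a map of kernels (chain maps commute with differentials) and so extends to a chain map $\overline{f}\colon\overline{pX}\to\overline{pY}$, checks that $\overline{f}$ is a quasi-isomorphism, and uses the vanishing hypothesis to get $pX\simeq\overline{pX}$ and $pY\simeq\overline{pY}$, yielding $X\simeq pX\simeq\overline{pX}\simeq\overline{pY}\simeq pY\simeq Y$. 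You repair \emph{from above} instead: the good truncation $\tau_{\geq t+1}$ deletes exactly the spurious degree-$t$ cohomology ${\rm Ker}\,d^t$ that the paper's augmentation kills, and the transported isomorphism $\tau_{\geq t+1}(f)$ comes for free from functoriality of $\tau_{\geq t+1}$ on $\mathbf{D}^b(A\mbox{-mod})$, with the hypothesis entering to make $pX\to\tau_{\geq t+1}(pX)$ a quasi-isomorphism and $\tau_{\geq t+1}(\iota)$ an isomorphism. In fact the two repairs land in the same place: $\tau_{\geq t+1}(\overline{pX})=\tau_{\geq t+1}\bigl((pX)_{\geq t}\bigr)$ because good truncation at $t+1$ only depends on degrees $\geq t$, and $\overline{pX}$ has no cohomology in degrees $\leq t$. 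What your route buys is the absence of any hand-constructed map or diagram check, at the cost of invoking the standard fact that good truncations preserve quasi-isomorphisms and hence descend to endofunctors of the derived category; what the paper's route buys is complete self-containedness, using nothing beyond the definition of quasi-isomorphism, which matches the elementary style of the section. Both arguments use the hypothesis $H^i(X)=H^i(Y)=0$ for $i\leq t$ at the same two pressure points, and both are complete.
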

\begin{proof}
	Let $(f^i)_{i\in \mathbb{Z}}\colon (pX)_{\geq t}\rightarrow (pY)_{\geq t}$ in $\mathbf{K}^b(A{\mbox{-\rm proj}})$ be a homotopy equivalence. By assumption, $H^{t}(pX)=H^{t}(pY)=0$. So $(f^i)_{i\in \mathbb{Z}}$ induces an quasi-isomorphism $(\overline{f}^i)_{i\in \mathbb{Z}}$ from $\overline{pX}$ to $\overline{pY}$ as follows.\\
	$\xymatrix@C=5ex{
		\overline{pX}\colon\dots\ar[r] & 0\ar[d]_{0}\ar[r] & {\rm Ker}d^{t}_{pX}\ar[d]_{\overline{f}^{t-1}} \ar[r] & (pX)^{t}\ar[r]^{d^{t}_{pX}}\ar[d]_{\overline{f}^t=f^{t}} & (pX)^{t+1}\ar[d]^{\overline{f}^{t+1}=f^{t+1}}\ar[r]^{d^{t+1}_{pX}} & (pX)^{t+2}\ar[d]^{\overline{f}^{t+2}=f^{t+2}}\ar[r]^{d^{t+2}_{pX}}&\dots\\
		\overline{pY}\colon\dots\ar[r] & 0\ar[r] &{\rm Ker}d^{t}_{pY}\ar[r] & (pY)^{t}\ar[r]_{d^{t}_{pY}} & (pY)^{t+1}\ar[r]_{d^{t+1}_{pY}} &(pY)^{t+2}\ar[r]_{d^{t+2}_{pY}}&\dots
	}$\\
	So we have isomorphisms $X\simeq pX\simeq\overline{pX}\simeq\overline{pY}\simeq pY\simeq Y$ in $\mathbf{D}^b(A\mbox{-{\rm mod}})$.
\end{proof}

Recall that a complex $(P^i,d^i)$ in $\mathbf{K}^-(A\mbox{-proj})$ is called \emph{homotopically-minimal} if ${\rm Im}d^i\subseteq {\rm rad}P^{i+1}$ for each $i$. Each $X$ in $\mathbf{D}^{b}(A\mbox{-mod})$ has a homotopically-minimal projective resolution which is quasi-isomorphic to $X$; see \cite[Proposition B.1]{CYZ}.

\begin{lem}\label{bound}
	Assume that we are given $\underline{n}=(n_i)_{i\in\mathbb{Z}}\in\mathbb{N}^{(\mathbb{Z})}$. Then the set $$p_i:=\{\dim_kP^i\,|\,P\in\mathbf{K}^{-,b}(A\mbox{-{\rm proj}})\mbox{ homotopically-minimal with }\underline{\dim}_kP=\underline{n}\}$$ is bounded for each $i\in\mathbb{Z}$.
\end{lem}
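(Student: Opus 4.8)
The plan is to induct on the cohomological width $w=r-s$ of $\underline{n}$, where $s$ and $r$ denote the smallest and largest indices with $n_s,n_r\neq 0$ (if $\underline{n}=0$, then a homotopically-minimal acyclic complex is zero, so there is nothing to prove). The whole argument rests on two elementary estimates coming from the finite-dimensionality of $A$. Let $C=\max\{\dim_k P(S)\}$ taken over the finitely many simple $A$-modules $S$, where $P(S)$ is the projective cover of $S$. Then for every finitely generated module $M$ one has $\dim_k P(M)\leq C\dim_k M$, since the number of indecomposable summands of the top $M/{\rm rad}\,M$ is at most $\dim_k M$; consequently the syzygy satisfies $\dim_k\Omega M\leq\dim_k P(M)\leq C\dim_k M$, and by iteration $\dim_k\Omega^j M\leq C^j\dim_k M$ together with $\dim_k P(\Omega^j M)\leq C^{\,j+1}\dim_k M$.

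For the base case $w=0$ the complex $P$ is, up to shift, the homotopically-minimal projective resolution of a single module $M$ with $\dim_k M=n_s$: here $P^{s-j}=P(\Omega^j M)$ for $j\geq 0$ and $P^i=0$ for $i>s$, so $\dim_k P^i\leq C^{\,s-i+1}n_s$ for each $i\leq s$, a bound depending only on $\underline{n}$ and $A$. For the inductive step I would use the truncation triangle $\tau_{\leq r-1}X\to X\to M[-r]\xrightarrow{\delta}(\tau_{\leq r-1}X)[1]$ in $\mathbf{D}^b(A\mbox{-mod})$, where $X$ is the object with $pX=P$ and $M=H^r(X)$. Since the cohomology of $X$ lies in $[s,r]$, the object $\tau_{\leq r-1}X$ has width $<w$ and $\tau_{\geq r}X\simeq M[-r]$. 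Writing $P'$ and $Q$ for the homotopically-minimal resolutions of $\tau_{\leq r-1}X$ and of $M[-r]$, the connecting map $\delta$ lifts to an honest chain map between these bounded-above complexes of projectives (they are $K$-projective), and the mapping cone of this lift is a projective resolution of $X$ whose degree-$i$ term is $(P')^i\oplus Q^i$. Because the genuinely minimal resolution $P$ is obtained from any resolution of $X$ by splitting off a contractible complex, which only removes dimensions, we get $\dim_k P^i\leq\dim_k(P')^i+\dim_k Q^i$; the first summand is bounded by the induction hypothesis and the second by the base-case estimate applied to $M$, completing the induction.

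The step I expect to require the most care is controlling the unbounded-below tail of $P$: a priori $P$ has infinitely many nonzero terms, so one cannot argue by merely counting them. This is exactly what the geometric syzygy estimate $\dim_k P(\Omega^j M)\leq C^{\,j+1}\dim_k M$ resolves, since for a fixed cohomological degree $i$ the relevant syzygy index is determined by $\underline{n}$ and stays fixed, even though the bound itself grows with that index. The other point needing justification is the comparison $\dim_k P^i\leq\dim_k(\mathrm{cone})^i$, which relies on the Krull--Schmidt-type decomposition of a bounded-above complex of projectives into its homotopically-minimal part and a contractible complex; I would invoke the existence and uniqueness of homotopically-minimal projective resolutions (\cite[Proposition B.1]{CYZ}) to make this precise.
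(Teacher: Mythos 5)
Your proof is correct, but it takes a genuinely different (and heavier) route than the paper's. The paper argues by a single downward induction on the degree $i$: homotopical minimality gives ${\rm Im}\,d^{i-1}\subseteq{\rm rad}\,P^i$, hence
$$\dim_k(P^i/{\rm rad}\,P^i)\leq\dim_kH^i(P)+\dim_kP^{i+1};$$
Nakayama then forces $P^i=0$ for $i>r$, and since a projective module with top of bounded dimension has bounded dimension, the terms $P^r,P^{r-1},\dots$ get bounded successively. You instead induct on the width of the cohomology, performing a d\'{e}vissage via the truncation triangle, lifting the connecting morphism to a chain map between $K$-projective resolutions, and comparing the minimal resolution of $X$ with the cocone of that lift after splitting off a contractible summand via \cite[Proposition B.1]{CYZ}. (A small slip: what has degree-$i$ term $(P')^i\oplus Q^i$ and is quasi-isomorphic to $X$ is the \emph{shifted} cone ${\rm cone}(\tilde{\delta})[-1]$, not the cone itself, but your index computation is the right one.) Both arguments ultimately rest on the same two elementary facts---minimality plus Nakayama, and the projective-cover estimate $\dim_kP(M)\leq C\dim_kM$---and they yield essentially the same numerical bound $\dim_kP^i\leq\sum_{j\geq i}C^{\,j-i+1}n_j$. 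What the paper's approach buys is brevity and self-containedness: no triangles, no lifting of morphisms in the derived category, no Krull--Schmidt splitting of complexes. What yours buys is a structural picture (each cohomology degree contributes its own minimal resolution, glued along connecting maps), at the cost of more machinery. One point you should make explicit: your base case asserts that a homotopically-minimal complex with cohomology concentrated in degree $s$ is the shifted minimal projective resolution of $H^s(P)$; the vanishing $P^i=0$ for $i>s$ needed there is exactly the Nakayama argument the paper spells out, so it should not be taken entirely for granted, although once stated your estimate $\dim_kP^{s-j}=\dim_kP(\Omega^jM)\leq C^{\,j+1}n_s$ goes through as written.
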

\begin{proof}
	By assumption, for each homotopically-minimal $P$ in $\mathbf{K}^-(A\mbox{-{\rm proj}})$, we have
	\begin{align}
	\dim_k(P^i/{{\rm rad}P^i})\leq &\dim_k(P^i/{\rm Im}d^{i-1})\notag\\
	={}& \dim_kP^i-\dim_k{\rm Im}d^{i-1}\notag\\
	={}&\dim_k{\rm Ker}d^i+\dim_k{\rm Im}d^i-\dim_k{\rm Im}d^{i-1}\notag\\
	={}&\dim_kH^i(P)+\dim_k{\rm Im}d^i\notag\\
	\leq{}&\dim_kH^i(P)+\dim_kP^{i+1}.\notag
	\end{align}
	For each homotopically-minimal $P$ in $\mathbf{K}^-(A\mbox{-{\rm proj}})$ with $\underline{\dim}_kP=\underline{n}$, let $r$ be the largest integer such that $n_r\neq 0$. Then $r$ is also the largest number such that $P^r\neq0$. So $\dim_kP^i=0$ for $i>r$. Recall a fact that, given $n$ in $\mathbb{N}$, the set $$\{\dim_kQ\,|\,Q\mbox{ a projective } A\mbox{-module with }\dim_k(Q/{\rm rad}Q)\leq n\}$$ is bounded. Hence $p_r$ is bounded since $\dim_k(P^r/{{\rm rad}P^r})\leq n_r$. 
	
	Once $p_{t+1}$ is bounded for some $t\leq r-1$, the set $$\{\dim_k(P^{t}/{{\rm rad}P^{t}})\,|\,P\in\mathbf{K}^{-,b}(A\mbox{-{\rm proj}})\mbox{ homotopically-minimal with }\underline{\dim}_kP=\underline{n}\}$$ is bounded by the inequality above. Then $p_{t}$ is bounded by the fact. Inductively, we can prove the statement.
\end{proof}

Recall that the \textit{component dimension vector} of a bounded complex $X$ is denoted by $$\mbox{c-}\underline{\dim}_kX=(\dim_kX^i)_{i\in\mathbb{Z}}\in\mathbb{N}^{(\mathbb{Z})}.$$

The following lemma is essentially contained in \cite[Theorem 2.1 (\romannumeral2) and (\romannumeral3)]{V} and \cite[Theorem 2.3 a)]{B}. We include a direct proof.

\begin{lem}\label{spequ}
	The following statements are equivalent. \\
	{\rm (1)} The algebra $A$ is derived-discrete over $k$.\\
	{\rm (2)} For each $\underline{n}\in\mathbb{N}^{(\mathbb{Z})}$, $\{[P]\in\mathbf{K}^b(A\mbox{-{\rm proj}})\,|\,\underline{\dim}_kP=\underline{n}\}$ is a finite set.\\
	{\rm (3)} For each $\underline{n}\in\mathbb{N}^{(\mathbb{Z})}$, $\{[P]\in\mathbf{K}^b(A\mbox{-\rm{proj}})\,|\,\mbox{c-}\underline{\dim}_kP=\underline{n}\}$ is a finite set.
\end{lem}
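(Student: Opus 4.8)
The plan is to prove the equivalence by establishing the chain of implications $(1)\Rightarrow(2)\Rightarrow(3)\Rightarrow(1)$, using the triangle equivalence $p$ and the truncation/minimality machinery assembled in the preceding lemmas. First I would address $(1)\Rightarrow(2)$. Given $\underline{n}$, I want to show that only finitely many $[P]\in\mathbf{K}^b(A\mbox{-proj})$ have $\underline{\dim}_kP=\underline{n}$. Since $\mathbf{K}^b(A\mbox{-proj})$ is a full subcategory of $\mathbf{K}^{-,b}(A\mbox{-proj})\simeq\mathbf{D}^b(A\mbox{-mod})$ via $p$, each such $P$ corresponds to an object $X\in\mathbf{D}^b(A\mbox{-mod})$ with $\underline{\dim}_kX=\underline{n}$, and derived-discreteness supplies only finitely many isomorphism classes of such $X$. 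I must check that distinct classes $[P]$ in $\mathbf{K}^b$ cannot collapse to the same class in $\mathbf{D}^b$ in a way that produces infinitely many preimages; but since $p$ is an equivalence onto $\mathbf{K}^{-,b}$, the inclusion $\mathbf{K}^b\hookrightarrow\mathbf{K}^{-,b}$ is fully faithful, so the isomorphism class of $P$ in $\mathbf{K}^b$ is determined by its image, and the finiteness transfers directly.

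\emph{The implication} $(2)\Rightarrow(3)$ \emph{is the technical core and is where I expect the main obstacle.} The cohomology dimension vector $\underline{\dim}_kP$ generally carries less information than the component dimension vector $\mbox{c-}\underline{\dim}_kP$, so $(3)$ looks stronger; the point is to reverse this. Fix $\underline{n}$ and suppose $\mbox{c-}\underline{\dim}_kP=\underline{n}$. The cohomology dimensions $\dim_kH^i(P)$ are then bounded in terms of the $\dim_kP^i$ (each cohomology is a subquotient of $P^i$), so the set of possible cohomology dimension vectors arising from $\mbox{c-}\underline{\dim}_kP=\underline{n}$ is \emph{finite}. By $(2)$, for each such cohomology dimension vector there are only finitely many isomorphism classes, and a finite union of finite sets is finite; this yields $(3)$. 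I would state this carefully: the map sending $[P]$ to its cohomology dimension vector has finite image on the fiber $\{\mbox{c-}\underline{\dim}_kP=\underline{n}\}$, and each fiber over $(2)$ is finite.

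The remaining implication $(3)\Rightarrow(1)$ is where Lemmas \ref{iso} and \ref{bound} do the work. Given $\underline{n}\in\mathbb{N}^{(\mathbb{Z})}$ I want finitely many $[X]\in\mathbf{D}^b(A\mbox{-mod})$ with $\underline{\dim}_kX=\underline{n}$. For each such $X$, pass to its homotopically-minimal projective resolution $pX$, and choose $t$ below the support of $\underline{n}$ so that $H^i(X)=0$ for $i\leq t$. By Lemma \ref{iso}, the isomorphism class of $X$ is determined by the class of the brutal truncation $(pX)_{\geq t}$ in $\mathbf{K}^b(A\mbox{-proj})$. By Lemma \ref{bound}, the component dimensions $\dim_k(pX)^i$ are bounded for each $i$, and the complex is supported in a bounded range of degrees (above by $r$ as in Lemma \ref{bound}, below by the choice of $t$); hence the truncations $(pX)_{\geq t}$ range over a set with only finitely many distinct component dimension vectors $\mbox{c-}\underline{\dim}_k$. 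Applying $(3)$ to each of these finitely many vectors gives finitely many classes in $\mathbf{K}^b(A\mbox{-proj})$, and therefore finitely many $[X]$, establishing $(1)$. The delicate point here is ensuring $t$ can be chosen uniformly in a way compatible with the homotopically-minimal resolution so that Lemma \ref{bound} indeed bounds all relevant component dimensions; I would fix $t$ depending only on $\underline{n}$ (the largest $i$ with $n_i\neq0$ determines the support), which makes the argument uniform.
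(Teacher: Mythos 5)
Your proposal is correct and takes essentially the same route as the paper's own proof: the same cycle $(1)\Rightarrow(2)\Rightarrow(3)\Rightarrow(1)$, with $(2)\Rightarrow(3)$ obtained from the inequality $\underline{\dim}_kP\leq\mbox{c-}\underline{\dim}_kP$ and a finite union over the finitely many possible cohomology dimension vectors, and $(3)\Rightarrow(1)$ obtained by passing to homotopically-minimal resolutions, using Lemma \ref{bound} to bound the component dimensions of the brutal truncations $(pX)_{\geq t}$ and Lemma \ref{iso} to recover $[X]$ from $[(pX)_{\geq t}]$. Your choice of $t$ (below the support of $\underline{n}$, depending only on $\underline{n}$) matches the paper's, so no changes are needed.
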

\begin{proof}
	(1)$\Rightarrow$(2) is obvious.
	
	(2)$\Rightarrow$(3). For each $\underline{n}\in\mathbb{N}^{(\mathbb{Z})}$, $\underline{n}$ has finitely many partitions. By assumption, the set $$\{[P]\in\mathbf{K}^b(A\mbox{-{\rm proj}})\,|\,\underline{\dim}_kP\leq\underline{n}\}$$ is finite. Since the cohomology dimension vector is not larger than the component dimension vector, the set $$\{[P]\in\mathbf{K}^b(A\mbox{-proj})\,|\,\mbox{c-}\underline{\dim}_kP=\underline{n}\}$$ is finite. 
	
	(3)$\Rightarrow$(1). For each $\underline{n}=(n_i)_{i\in\mathbb{Z}}\in\mathbb{N}^{(\mathbb{Z})}$, let $t$ be the least number such that $n_{t+1}\neq0$ and $r$ be the largest number such that $n_r\neq0$. For each $X\in\mathbf{D}^{b}(A\mbox{-mod})$, let $pX\in\mathbf{K}^-(A\mbox{-proj})$ be the homotopically-minimal projective resolution. By Lemma~\ref{bound}, for each $i$, $\dim_k(pX)_{\geq t}^i$ is uniformly bounded, say by $m_i$. We can assume that $m_i=0$ for $i<t$ and $i>r$. Set $\underline{m}=(m_i)_{i\in\mathbb{Z}}\in\mathbb{N}^{(\mathbb{Z})}$.
	
	Notice that $\underline{m}$ has finitely many partitions. By assumption, the set $$\{[(pX)_{\geq t}]\in\mathbf{K}^b(A\mbox{-proj})\,|\,X\in\mathbf{D}^{b}(A\mbox{-mod})\mbox{ with c-}\underline{\dim}_k(pX)_{\geq t}\leq\underline{m}\}$$ is finite. By the argument in the above paragraph, the set $$\{[(pX)_{\geq t}]\in\mathbf{K}^b(A\mbox{-proj})\,|\,X\in\mathbf{D}^{b}(A\mbox{-mod})\mbox{ with }\underline{\dim}_kX=\underline{n}\}$$ is finite. By Lemma \ref{iso}, the set $$\{[X]\in\mathbf{D}^b(A\mbox{-{\rm mod}})\,|\,\underline{\dim}_kX=\underline{n}\}$$ is finite. 
\end{proof}

\section{Separable functors and algebra extensions}
In this section, we recall the notions of separable functors, separable extensions and split extensions. 

According to \cite{NBO}, a functor $F\colon \mathcal{C}\rightarrow \mathcal{D}$ is called \emph{separable} if for any $X, Y$ in $\mathcal{C}$, there is a map $$H_{X,Y}\colon {\rm Hom}_\mathcal{D}(F(X),F(Y))\rightarrow{\rm Hom}_\mathcal{C}(X,Y)$$ such that $H_{X,Y}(F(f))=f$, for any $f\in{\rm Hom}_\mathcal{C}(X,Y)$, and $H_{X,Y}$ is natural in $X$ and $Y$.
It is called a cleaving functor in \cite{V}. 

Let $\phi\colon  A\rightarrow B$ be a $k$-algebra homomorphism (in some literature, it is called a $k$-algebra extension). It induces the restriction functor $${\rm Hom}_B(B,-)\colon B\mbox{-mod} \rightarrow A\mbox{-mod},$$ and its left adjoint functor $$B\otimes_A-\colon A\mbox{-mod}\rightarrow B\mbox{-mod}.$$

\begin{defn}[{\cite[1.3]{NBO}}]
	We call an algebra homomorphism $\phi\colon A\rightarrow B$ a \textbf{split} algebra extension if $B\otimes_A-\colon A\mbox{-mod}\rightarrow B\mbox{-mod}$ is separable, and a \textbf{separable} algebra extension if ${\rm Hom}_B(B,-)\colon B\mbox{-mod}\rightarrow A\mbox{-mod}$ is separable.
\end{defn} 

The following theorem is the main result of this section. We recall some notations. We extend the adjoint pair $(F=B\otimes_A-, G={\rm Hom}_B(B,-))$ to an adjoint pair $(\mathbf{K}(F),\mathbf{K}(G))$ between $\mathbf{K}^*(A\mbox{-mod})$ and $\mathbf{K}^*(B\mbox{-mod})$ in a natural manner, where $*$ can be $b$, $-$ or $+$. Since $G$ is exact, $$\mathbf{D}(G)=\mathbf{K}(G)\colon\mathbf{D}^-(B\mbox{-{\rm mod}})\rightarrow\mathbf{D}^-(A\mbox{-{\rm mod}})$$ is its own right derived functor. Since $F$ is right exact and preserves projectives, the left derived functor of $F$ is $$\mathbb{L}F=q\mathbf{K}(F)p\colon\mathbf{D}^-(A\mbox{-{\rm mod}})\rightarrow\mathbf{D}^-(B\mbox{-{\rm mod}}),$$ where $q$ is the localization functor and sometimes we omit it on objects.
\begin{thm}\label{sep}
	Let $\phi\colon A\rightarrow B$ be a $k$-algebra extension between two finite dimensional $k$-algebras with $(F,G)$ the corresponding adjoint pair. \\
	{\rm (1)} The extension $\phi$ is a split extension if and only if $\mathbf{K}(F)$ is separable if and only if $\mathbb{L}F$ is separable.\\
	{\rm (2)} The extension $\phi$ is a separable extension if and only if $\mathbf{K}(G)$ is separable, which are implied by that $\mathbf{D}(G)$ is separable. If further $F$ is exact, then $\phi$ is a separable extension if and only if $\mathbf{D}(G)$ is separable.
\end{thm}

To prove the theorem, we need some preparation. We first give some examples. The proof is in the end of this section.

\begin{exm}\label{exm}
	Let $A$ be a $k$-algebra.\\
	{\rm (1)} For each two-sided ideal $I$ of $A$, the canonical quotient $A\rightarrow A/I$ is separable.\\
	{\rm (2)} Let $G$ be a finite group acting on $A$ with its order $|G|$ invertible in $k$. Then the extension from $A$ to its skew group algebra $AG$ is separable and split; see \cite[Section 1]{RR}.\\
	{\rm (3)} Let $K/k$ be a finite field extension. We consider the extension $$\phi\colon A\rightarrow A\otimes_kK, \psi(a)=a\otimes 1.$$ It has a retraction $a\otimes\lambda\mapsto a\pi(\lambda),\forall a\in A, \lambda\in K,$ as an $A$-bimodules homomorphism, where $\pi:K\rightarrow k$ is a $k$-linear retraction of $k\hookrightarrow K$. Hence $\phi$ is a split extension by \cite[Proposition~1.3 (2)]{NBO}.
	
	If further $K/k$ is separable, then the multiplication map $K\otimes_kK\rightarrow K$ has a section $\psi$ as a $K$-bimodule homomorphism. It induces an $A\otimes_kK$-bimodule homomorphism
	$$A\otimes_kK\overset{\mathrm{Id}_A\otimes\psi}{\rightarrow} A\otimes_kK\otimes_kK\overset{\theta\otimes \mathrm{Id}_{K\otimes_kK}}{\rightarrow} A\otimes_AA\otimes_kK\otimes_kK\overset{\rho}{\rightarrow} (A\otimes_kK)\otimes_A(A\otimes_kK),$$ where $\theta(a)=a\otimes1$ and $\rho(a_1\otimes a_2\otimes \lambda_1\otimes\lambda_2)=a_1\otimes \lambda_1\otimes a_2\otimes\lambda_2$, $\forall a, a_1, a_2\in A, \lambda_1,\lambda_2\in K$. This is a section of the multiplication map $(A\otimes_kK)\otimes_A(A\otimes_kK)\rightarrow A\otimes_kK$. By \cite[Proposition~1.3 (1)]{NBO}, $\phi$ is also a separable extension as $k$-algebras.
\end{exm}

\begin{lem}
	Let $F$ be a functor between two module categories.\\
	{\rm (1)} The functor $F$ is separable if and only if so is $\mathbf{K}(F)$. \\
	{\rm (2)} If $F$ is exact and $\mathbf{D}(F)$ is separable, then $F$ is separable.
\end{lem}
\begin{proof}
	(1). If $F$ is separable, it has a natural retraction $H_{M,N}$ on morphisms for any modules $M$ and $N$. For any complexes $X=(X^i)_{i\in\mathbb{Z}}$ and $Y=(Y^i)_{i\in\mathbb{Z}}$, we extend $H_{M,N}$ term-wise to a natural retraction $$H_{X,Y}\colon (f^i)_{i\in\mathbb{Z}}\mapsto(H_{X^i,Y^i}(f^i))_{i\in\mathbb{Z}}$$ on chain maps between complex categories. Moreover, if $(f^i)_{i\in\mathbb{Z}}$ is null-homotopic with $f^i=F(d^{i-1})\circ s^i+s^{i+1}\circ F(d^i)$, then $(H_{X^i,Y^i}(f^i))_{i\in\mathbb{Z}}$ is null-homotopic with $$H_{X^i,Y^i}(f^i)=d^{i-1}\circ H_{X^i,Y^{i-1}}(s^i)+H_{X^{i+1},Y^i}(s^{i+1})\circ d^i$$ due to the naturality of $H_{X^i,Y^j}$. Thus we get a natural retraction on morphisms for $\mathbf{K}(F)$ in homotopy categories.
	
	Assume that $\mathbf{K}(F)$ is separable. For any two $A$-modules $M$ and $N$, viewing as stalk complexes at degree zero, we identify ${\rm Hom}_{A}(M,N)$ with ${\rm Hom}_{\mathbf{K}^*(A\mbox{-{\rm mod}})}(M,N)$ and ${\rm Hom}_{B}(F(M),F(N))$ with ${\rm Hom}_{\mathbf{K}^*(B\mbox{-{\rm mod}})}(\mathbf{K}(F)(M),\mathbf{K}(F)(N))$. Hence a natural retraction 
	$${\rm Hom}_{\mathbf{K}^*(B\mbox{-{\rm mod}})}(\mathbf{K}(F)(M),\mathbf{K}(F)(N))\rightarrow{\rm Hom}_{\mathbf{K}^*(A\mbox{-{\rm mod}})}(M,N),$$ for $\mathbf{K}(F)$ gives a natural retraction $${\rm Hom}_{B}(F(M),F(N))\rightarrow{\rm Hom}_{A}(M,N)$$ for $F$.
	
	(2). For any two modules $M$ and $N$ in $A\mbox{-mod}$, we have natural isomorphisms $${\rm Hom}_{A}(M,N)\simeq{\rm Hom}_{\mathbf{D}^*(A\mbox{-{\rm mod}})}(M,N)$$ and $${\rm Hom}_{B}(F(M),F(N))\simeq{\rm Hom}_{\mathbf{D}^*(B\mbox{-{\rm mod}})}(F(M),F(N)),$$ where $M$, $N$, $F(M)=\mathbf{D}(F)(M)$, $F(N)=\mathbf{D}(F)(N)$ in derived categories are viewed as stalk complexes at degree zero. Hence the statement holds.
\end{proof}

When consider separable functors in adjoint pairs, the following lemma is used frequently; see \cite[1.2]{R}
\begin{lem}\label{sepeq} 
	Let $(F,G,\eta\colon {\rm Id}_\mathcal{C}\rightarrow GF,\epsilon\colon FG\rightarrow{\rm Id}_\mathcal{D})$ be an adjoint pair between categories $\mathcal{C}$ and $\mathcal{D}$. Then the following statements hold.\\
	{\rm (1)} The functor $F$ is separable if and only if there is a natural transformation $\delta\colon GF\rightarrow{\rm Id}_\mathcal{C}$ such that $\delta\circ\eta={\rm Id}$.\\
	{\rm (2)} The functor $G$ is separable if and only if there is a natural transformation $\zeta\colon {\rm Id}_\mathcal{D}\rightarrow FG$ such that $\epsilon\circ\zeta={\rm Id}$.	
\end{lem}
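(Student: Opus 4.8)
The plan is to exploit the two natural transformations attached to the adjunction, namely the unit $\eta$ and the counit $\epsilon$, together with the triangle identities $\epsilon_{F(X)}\circ F(\eta_X)={\rm Id}_{F(X)}$ and $G(\epsilon_Y)\circ\eta_{G(Y)}={\rm Id}_{G(Y)}$. I would prove (1) in full and then deduce (2) by passing to the opposite categories: under $(-)^{\rm op}$ the adjunction $F\dashv G$ becomes $G^{\rm op}\dashv F^{\rm op}$ with unit $\epsilon^{\rm op}$ and counit $\eta^{\rm op}$, and separability is self-dual, so (2) is exactly (1) applied to this dual adjunction, with $\delta\colon GF\to{\rm Id}$ turning into $\zeta\colon{\rm Id}\to FG$ and $\delta\circ\eta={\rm Id}$ turning into $\epsilon\circ\zeta={\rm Id}$.

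For the ``if'' part of (1), suppose a natural transformation $\delta\colon GF\to{\rm Id}_\mathcal{C}$ with $\delta\circ\eta={\rm Id}$ is given. For objects $X,Y$ of $\mathcal{C}$ and a morphism $g\colon F(X)\to F(Y)$ I would set
$$H_{X,Y}(g)=\delta_Y\circ G(g)\circ\eta_X\colon X\longrightarrow GF(X)\longrightarrow GF(Y)\longrightarrow Y.$$
To verify the retraction property $H_{X,Y}(F(f))=f$ for $f\colon X\to Y$, I would rewrite $G(F(f))\circ\eta_X=\eta_Y\circ f$ using naturality of $\eta$, and then apply $\delta_Y\circ\eta_Y={\rm Id}_Y$. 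Naturality of $H$ in both variables follows formally from naturality of $\eta$ and of $\delta$ together with functoriality of $G$.

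For the ``only if'' part of (1), suppose $F$ is separable with retractions $H_{X,Y}$. The counit supplies a distinguished morphism $\epsilon_{F(X)}\colon F(GF(X))\to F(X)$ lying in ${\rm Hom}_\mathcal{D}(F(GF(X)),F(X))$, and I would define
$$\delta_X=H_{GF(X),X}(\epsilon_{F(X)})\colon GF(X)\longrightarrow X.$$
Applying naturality of $H$ to the arrow $\eta_X\colon X\to GF(X)$ in the first variable converts $\delta_X\circ\eta_X$ into $H_{X,X}(\epsilon_{F(X)}\circ F(\eta_X))$, which by the triangle identity equals $H_{X,X}({\rm Id}_{F(X)})=H_{X,X}(F({\rm Id}_X))={\rm Id}_X$; hence $\delta\circ\eta={\rm Id}$. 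Naturality of $\delta$ then comes from naturality of $H$ in both variables combined with naturality of $\epsilon$.

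The individual computations are routine once conventions are fixed, so the main obstacle is bookkeeping: the family $H$ must be treated as a natural transformation between the bifunctors ${\rm Hom}_\mathcal{D}(F(-),F(?))$ and ${\rm Hom}_\mathcal{C}(-,?)$, each contravariant in the first and covariant in the second variable, and its naturality squares must be invoked with exactly the right arrows $F(u)$, $F(v)$ so that $\eta$ and $\epsilon$ can be peeled off at the correct spots. Keeping the variances straight and selecting the correct triangle identity in each direction is the only step that genuinely requires care.
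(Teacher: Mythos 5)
Your proposal is correct. The paper itself gives no proof of this lemma --- it only cites \cite[1.2]{R} (Rafael, \emph{Separable functors revisited}) --- and your argument is precisely the standard one from that reference: the explicit formula $H_{X,Y}(g)=\delta_Y\circ G(g)\circ\eta_X$ for the ``if'' direction, the definition $\delta_X=H_{GF(X),X}(\epsilon_{F(X)})$ combined with the triangle identity $\epsilon_{F(X)}\circ F(\eta_X)={\rm Id}_{F(X)}$ for the ``only if'' direction, and deduction of (2) from (1) by passing to opposite categories, where the adjunction $F\dashv G$ becomes $G^{\rm op}\dashv F^{\rm op}$ and separability is self-dual. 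All the naturality verifications you sketch go through exactly as stated, so your write-up supplies a complete proof of a statement the paper leaves to the literature.
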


Let $(F,G)$ be an adjoint pair between $A\mbox{-mod}$ and $B\mbox{-mod}$ with unit $\eta$ and counit $\epsilon$. Assume that $G$ is an exact functor. The unit and counit of $(\mathbf{K}(F),\mathbf{K}(G))$ are $\mathbf{K}(\eta)$ and $\mathbf{K}(\epsilon)$, where $$\mathbf{K}(\eta)_X=\overline{(\eta_{X^i})_{i\in\mathbb{Z}}},\forall X\in\mathbf{K}^*(A\mbox{-mod})\mbox{ and }  \mathbf{K}(\epsilon)_Y=\overline{(\epsilon_{Y^i})_{i\in\mathbb{Z}}},\forall Y\in\mathbf{K}^*(B\mbox{-mod}).$$

It is well-known that $(\mathbb{L}F,\mathbf{D}(G))$ is an adjoint pair; see \cite[Section 10.7.1]{W}. A natural isomorphism $\Psi$ of this adjoint pair can be given by the following commutative diagram for any complexes $X$ and $Y$.
$$\xymatrix@1{{\rm Hom}_{\mathbf{D}^-(B\mbox{-{\rm mod}})}(\mathbb{L}F(X),Y)\ar[r]^-{\Psi}\ar@{=}[d] & {\rm Hom}_{\mathbf{D}^-(A\mbox{-{\rm mod}})}(X,\mathbf{D}(G)(Y))\\
{\rm Hom}_{\mathbf{D}^-(B\mbox{-{\rm mod}})}(\mathbf{K}(F)(pX),Y)\ar[d]^-{q^{-1}}& {\rm Hom}_{\mathbf{D}^-(A\mbox{-{\rm mod}})}(pX,\mathbf{D}(G)(Y))\ar[u]^-{f}\\
{\rm Hom}_{\mathbf{K}^-(B\mbox{-{\rm mod}})}(\mathbf{K}(F)(pX),Y)\ar[r]^-{\psi}&{\rm Hom}_{\mathbf{K}^-(A\mbox{-{\rm mod}})}(pX,\mathbf{K}(G)(Y))
\ar[u]^-{q}}$$ 
Here, since $pX$ and $\mathbf{K}(F)(pX)$ are complexes of projectives, the localization functors $q$ are fully faithful. The isomorphism $f$ is induced by $a_X\colon pX\rightarrow X$, the projective resolution in $\mathbf{K}^-(A\mbox{-{\rm mod}})$, which is a quasi-isomorphism and natural on $X$. Finally, $\psi$ is the natural isomorphism of the adjoint pair $(\mathbf{K}(F),\mathbf{K}(G);\mathbf{K}(\eta),\mathbf{K}(\epsilon))$.

Using $\Psi$, we obtain the unit $\mathbf{D}(\eta)$ and counit $\mathbf{D}(\epsilon)$ of $(\mathbb{L}F,\mathbf{D}(G))$, where $$\mathbf{D}(\eta)_{X}=(\mathbf{K}(\eta)_{pX}) a_X^{-1}\colon X\longrightarrow \mathbf{D}(G)(\mathbb{L}F(X)), \forall X\in\mathbf{D}^-(A\mbox{-{\rm mod}})$$ are given by fractions (see \cite[Section 10.3]{W}) and $$\mathbf{D}(\epsilon)_{Y}=q(\mathbf{K}(\epsilon)_Y\circ\mathbf{K}(F)(a_{\mathbf{D}(G)(Y)}))\colon\mathbb{L}F(\mathbf{D}(G)(Y))\longrightarrow Y,\forall Y\in\mathbf{D}^-(B\mbox{-{\rm mod}}).$$ They are natural since they are constructed by functors and natural transformations.

The following lemma is well-known. We compare \cite[3.1]{V}.

\begin{lem}
	Let $(F,G)$ be an adjoint pair between module categories with $G$ an exact functor. Then $F$ is separable if and only if $\mathbb{L}F$ is separable.
\end{lem}
\begin{proof}
	By Lemma 3.4 and Lemma 3.5, we assume that $\mathbf{K}(F)$ is separable with $K(\delta)$ a natural retraction of $\mathbf{K}(\eta)$. For each $X\in\mathbf{D}^-(A\mbox{-{\rm mod}})$, $$q(a_X\circ K(\delta)_{pX})\circ(\mathbf{K}(\eta)_{pX})a_X^{-1}=(a_X\circ K(\delta)_{pX}\circ\mathbf{K}(\eta)_{pX})a_X^{-1}=\mathrm{Id}_X.$$ Hence the unit $\mathbf{D}(\eta)$ of $(\mathbb{L}F,\mathbf{D}(G))$ has a retraction $q(a_X\circ K(\delta)_{pX})$, which is natural on $X$. Therefore, $\mathbb{L}F$ is separable. 
	
	Conversely, if $\mathbb{L}F$ is separable, $\mathbf{D}(\eta)$ has a natural retraction $D(\delta)$.  For each $X\in A\mbox{-{\rm mod}}$, viewing as a stalk complex at degree zero, we have $$\mathbf{K}(G)(\mathbf{K}(F)(a_X))\circ\mathbf{K}(\eta)_{pX}=\mathbf{K}(\eta)_X\circ a_X\mbox{ and }H^0(q(\mathbf{K}(\eta)_X))=\eta_X,$$ where $H^0\colon\mathbf{D}^-(A\mbox{-{\rm mod}})\rightarrow A\mbox{-{\rm mod}}$ is the cohomology functor at degree zero. Since $G$ and $F$ are right exact and $a_X$ is a quasi-isomorphism, $H^0(q\mathbf{K}(G)(\mathbf{K}(F)(a_X)))$ and $H^0(q(a_X))$ are invertible. Hence $$H^0(q\mathbf{K}(G)(\mathbf{K}(F)(a_X)))^{-1}\circ H^0(q(\mathbf{K}(\eta)_X))= H^0(q(\mathbf{K}(\eta)_{pX}))\circ H^0(q(a_X))^{-1}.$$ For each $X\in A\mbox{-{\rm mod}}$,
	\begin{align*}
	&H^0(D(\delta)_X)\circ H^0(q\mathbf{K}(G)(\mathbf{K}(F)(a_X)))^{-1}\circ\eta_X\\
	=&H^0(D(\delta)_X)\circ H^0(q\mathbf{K}(G)(\mathbf{K}(F)(a_X)))^{-1}\circ H^0(q(\mathbf{K}(\eta)_X))\\
	=&H^0(D(\delta)_X)\circ H^0(q(\mathbf{K}(\eta)_{pX}))\circ H^0(q(a_X))^{-1}\\
	=&H^0(D(\delta)_X\circ\mathbf{K}(\eta)_{pX}a_X^{-1})=H^0(D(\delta)_X\circ\mathbf{D}(\eta)_{X})\\
	=&H^0(\mathrm{Id}_{X})=\mathrm{Id}_X.
	\end{align*} Therefore, we obtain a retraction $H^0(D(\delta)_X)\circ H^0(q\mathbf{K}(G)(\mathbf{K}(F)(a_X)))^{-1}$ of $\eta_{X}$ which is natural on $X$. So $F$ is separable by Lemma $\ref{sepeq}$.
\end{proof}

Dually, if $F$ is exact, $(\mathbf{D}(F),\mathbb{R}G)$ is an adjoint pair between $\mathbf{D}^+(A\mbox{-{\rm mod}})$ and $\mathbf{D}^+(B\mbox{-{\rm mod}})$. We have the following result.
\begin{lem}
	Let $(F,G)$ be an adjoint pair between module categories with $F$ an exact functor. Then $G$ is separable if and only if $\mathbb{R}G$ is separable. 
\end{lem}
\begin{rem}
	In the above lemma, the condition that $F$ is an exact functor is necessary; see Example \ref{cntemp}.
\end{rem}

\begin{proof}[Proof of Theorem 3.2]
	The statement (1) holds by Lemma 3.4 (1) and Lemma~3.6.
    The statement (2) is a consequence of Lemma 3.4 (1), (2) and Lemma 3.7.
\end{proof}

\section{derived-discreteness and split/separable extensions}
We keep the notation as in Section 3. The following main result shows that derived-discreteness is compatible with split/separable extensions.
\begin{thm}\label{main}
	Let $\phi\colon A\rightarrow B$ be a $k$-algebra extension between two finite dimensional $k$-algebras. Then following statements hold.\\
	{\rm (1)} If $\phi$ is a split extension and $B$ is derived-discrete over $k$, then $A$ is derived-discrete over $k$.\\
	{\rm (2)} If $\phi$ is a separable extension with $B_A$ a projective right $A$-module and $A$ is derived-discrete over $k$, then $B$ is derived-discrete over $k$.
\end{thm}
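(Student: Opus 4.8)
The plan is to reduce both statements to the component-dimension criterion of Lemma~\ref{spequ}(3), and then, in each direction, to produce a map on isomorphism classes of bounded projective complexes that is simultaneously finite-to-one and has finite image; such a map necessarily has finite domain.

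For (1), since $\phi$ is split, the functor $F$, and hence $\mathbf{K}^b(F)$, is separable by Lemma~\ref{sep}(1), so Lemma~\ref{sepeq}(1) shows that the unit $\eta_P\colon P\to \mathbf{K}^b(G)\mathbf{K}^b(F)(P)$ is a split monomorphism for every $P$ in $\mathbf{K}^b(A\mbox{-proj})$; in particular $P$ is a direct summand of $\mathbf{K}^b(G)\mathbf{K}^b(F)(P)$ in the Krull--Schmidt category $\mathbf{K}^b(A\mbox{-mod})$. I would then study the assignment $[P]\mapsto[\mathbf{K}^b(F)(P)]$. Because $F$ carries projectives to projectives and $\dim_k(B\otimes_A P^i)$ is bounded in terms of $\dim_kP^i$ and $\dim_kB$, fixing $\mbox{c-}\underline{\dim}_kP=\underline{n}$ forces $\mbox{c-}\underline{\dim}_k\mathbf{K}^b(F)(P)\leq\underline{m}$ for some $\underline{m}\in\mathbb{N}^{(\mathbb{Z})}$ depending only on $\underline{n}$; since $B$ is derived-discrete, Lemma~\ref{spequ}(3) makes this image finite. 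For the fibres, if $\mathbf{K}^b(F)(P)\cong Q_0$ is fixed, then $P$ is a direct summand of the fixed object $\mathbf{K}^b(G)(Q_0)$, which in a Krull--Schmidt category has only finitely many direct summands up to isomorphism. Finite image and finite fibres give a finite domain, so Lemma~\ref{spequ}(3) yields the derived-discreteness of $A$.

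For (2), I would argue dually with $G$ in place of $F$. Separability of $\phi$ gives that $G$, hence $\mathbf{K}^b(G)$, is separable by Lemma~\ref{sep}(2), so Lemma~\ref{sepeq}(2) shows the counit $\epsilon_Q\colon \mathbf{K}^b(F)\mathbf{K}^b(G)(Q)\to Q$ is a split epimorphism and $Q$ is a direct summand of $\mathbf{K}^b(F)\mathbf{K}^b(G)(Q)$. The hypothesis that $B_A$ is projective is exactly what makes $\mathbf{K}^b(G)$ carry $\mathbf{K}^b(B\mbox{-proj})$ into $\mathbf{K}^b(A\mbox{-proj})$, so that $[Q]\mapsto[\mathbf{K}^b(G)(Q)]$ lands in bounded projective complexes over $A$. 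As restriction preserves $k$-dimensions, $\mbox{c-}\underline{\dim}_k\mathbf{K}^b(G)(Q)=\mbox{c-}\underline{\dim}_kQ$, so derived-discreteness of $A$ makes the image finite; and the counit splitting together with the Krull--Schmidt property again makes the fibres finite. Lemma~\ref{spequ}(3) then gives the derived-discreteness of $B$.

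The main obstacle I anticipate is the finite-fibre step: separability only asserts that each object is a retract of its image under the round-trip functor, which by itself does not make the induced map on isomorphism classes injective. The way around this is to note that once the target isomorphism class is fixed the round-trip object is fixed as well, and then to invoke the Krull--Schmidt property of $\mathbf{K}^b(A\mbox{-mod})$ (resp.\ $\mathbf{K}^b(B\mbox{-mod})$), whereby a fixed object has only finitely many direct summands up to isomorphism. A secondary point to verify carefully is that the projectivity of $B_A$ in (2) is genuinely used---both to keep $\mathbf{K}^b(G)$ within projective complexes and, via Lemma~\ref{sep}(2), as the condition permitting passage to the derived level---matching the necessity recorded in Example~\ref{cntemp}.
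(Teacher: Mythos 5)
Your argument for part (1) is correct and is essentially the paper's proof: the same combination of Lemma~\ref{spequ}(3), the splitting of the unit coming from separability of $\mathbf{K}^b(F)$, and the Krull--Schmidt property of $\mathbf{K}^b(A\mbox{-mod})$; presenting it as ``finite image, finite fibres'' rather than as a pigeonhole contradiction is only cosmetic.

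Part (2), however, has a genuine gap. Your key claim---that projectivity of the right module $B_A$ ``is exactly what makes $\mathbf{K}^b(G)$ carry $\mathbf{K}^b(B\mbox{-proj})$ into $\mathbf{K}^b(A\mbox{-proj})$''---is false. The restriction functor $G$ sends the free module $B$ to ${}_AB$, so $G$ preserves projectives precisely when $B$ is projective as a \emph{left} $A$-module; projectivity of $B_A$ gives something different, namely that $F=B\otimes_A-$ is exact (projective $=$ flat here, since $B$ is finite dimensional). The paper itself keeps these two conditions apart: Proposition~\ref{ph}(2) assumes ${}_AB$ projective and uses it exactly as you do (``$G$ sends projectives to projectives''), whereas Theorem~\ref{main}(2) assumes $B_A$ projective and uses it, through Lemma~\ref{sep}(2), to upgrade separability of $G$ to separability of $\mathbf{D}^-(G)$. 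Under the actual hypothesis of the theorem, $\mathbf{K}^b(G)(Q)$ is in general a bounded complex of \emph{non-projective} $A$-modules, so Lemma~\ref{spequ}(3) over $A$ cannot be applied to it, and your image-finiteness step collapses.

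The repair forces the argument into the derived category, which is the paper's route: derived-discreteness of $A$ controls the isomorphism classes of the restrictions only in $\mathbf{D}^b(A\mbox{-mod})$; to push an isomorphism $\mathbf{D}^-(G)(Q)\simeq\mathbf{D}^-(G)(Q')$ forward one applies $\mathbb{L}F$, and to recover $Q$ as a direct summand of $\mathbb{L}F\mathbf{D}^-(G)(Q)$ one needs the counit to split at the derived level, i.e.\ separability of $\mathbf{D}^-(G)$---this is exactly where exactness of $F$, hence projectivity of $B_A$, genuinely enters. There is also a final wrinkle your sketch does not address: $\mathbb{L}F\mathbf{D}^-(G)(Q)$ lives in $\mathbf{D}^-(B\mbox{-mod})$, which is not Krull--Schmidt, and the paper needs the good-truncation trick ($\tau_{\geq t}$, landing back in $\mathbf{D}^b(B\mbox{-mod})$) before counting direct summands. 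What your homotopy-level argument does correctly prove is a variant of (2) in which ``$B_A$ projective'' is replaced by ``${}_AB$ projective''; that is a reasonable statement, and your proof of it is simpler than the paper's proof of (2), but it is not the theorem as stated.
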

\begin{proof}
	For (1), if $A$ is not derived-discrete, by Lemma \ref{spequ} there is an $\underline{n}\in\mathbb{N}^{(\mathbb{Z})}$ such that $$\{[P]\in\mathbf{K}^b(A\mbox{-proj})\,|\,\mbox{c-}\underline{\dim}_kP=\underline{n}\}$$ is an infinite set. Since $\dim_k\mathbf{K}(F)(P)^i=\dim_kB\otimes_AP^i$ for each $i$, the set $$\{\mbox{c-}\underline{\dim}_k\mathbf{K}(F)(P)\,|\,\underline{\dim}_kP=\underline{n}\}$$ is bounded, say by $\underline{m}\in\mathbb{N}^{(\mathbb{Z})}$. By the derived-discreteness of $B$ and Lemma \ref{spequ}, $$\{[\mathbf{K}(F)(P)]\in\mathbf{K}^b(B\mbox{-proj})\,|\,\mbox{c-}\underline{\dim}_k\mathbf{K}(F)(P)\leq\underline{m}\}$$ is a finite set. Therefore, $$\{[\mathbf{K}(F)(P)]\in\mathbf{K}^b(B\mbox{-proj})\,|\,\mbox{c-}\underline{\dim}_kP=\underline{n}\}$$ is a finite set. Then $$\{[\mathbf{K}(G) (\mathbf{K}(F)(P))]\in\mathbf{K}^b(A\mbox{-mod})\,|\,\mbox{c-}\underline{\dim}_kP=\underline{n}\}$$ is a finite set. 
	
	Since $\phi$ is a split extension, $\mathbf{K}(F)$ is a separable functor by Theorem \ref{sep}. By Lemma~\ref{sepeq} each $P\in\mathbf{K}^b(A\mbox{-proj})$ is a direct summand of $\mathbf{K}(G)(\mathbf{K}(F)(P))$ in $\mathbf{K}^b(A\mbox{-mod})$. Then the first and last sets above imply that there is an object $\mathbf{K}(G)(\mathbf{K}(F)(P))$ in $\mathbf{K}^b(A\mbox{-mod})$ with infinitely many pairwise non-isomorphic direct summands, which is impossible as $\mathbf{K}^b(A\mbox{-mod})$ is Krull-Schmidt.	
	
	For (2), it is a consequence of Theorem \ref{sep} (2) and the following lemma.
\end{proof}
\begin{lem}
	If $\phi\colon A\rightarrow B$ is a $k$-algebra extension with $\mathbf{D}(G)$ separable and $A$ is derived-discrete over $k$, then $B$ is derived-discrete over $k$.
\end{lem}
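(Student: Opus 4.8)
The plan is to run the argument dual to that of Theorem~\ref{main}(1), replacing the pair $(F,G)$ by the derived pair $(\mathbb{L}F,\mathbf{D}^-(G))$ and the Krull--Schmidt property of $\mathbf{K}^b(A\mbox{-mod})$ by that of $\mathbf{K}^b(B\mbox{-proj})$. Arguing by contradiction, suppose $B$ is not derived-discrete. By Lemma~\ref{spequ} there is a vector $\underline{n}=(n_i)_{i\in\mathbb{Z}}\in\mathbb{N}^{(\mathbb{Z})}$ such that the set of isomorphism classes $[Y]$ in $\mathbf{D}^b(B\mbox{-mod})$ with $\underline{\dim}_kY=\underline{n}$ is infinite. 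I fix such an $\underline{n}$ and let $t$ be the least integer with $n_{t+1}\neq 0$.

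First I would transport each such $Y$ to the $A$-side. Since $\mathbf{D}^-(G)$ is induced by the restriction functor along $\phi$, which preserves the underlying $k$-spaces, the object $GY\in\mathbf{D}^b(A\mbox{-mod})$ satisfies $\underline{\dim}_k GY=\underline{\dim}_k Y=\underline{n}$. As $A$ is derived-discrete, the classes $[GY]$ exhaust only finitely many isomorphism types $W_1,\dots,W_s$ in $\mathbf{D}^b(A\mbox{-mod})$, whence the objects $\mathbb{L}F(GY)$ exhaust only finitely many isomorphism types, namely $Z_j:=\mathbb{L}F(W_j)$ in $\mathbf{D}^-(B\mbox{-mod})$. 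Because $\mathbf{D}^-(G)$ is separable, Lemma~\ref{sepeq}(2) provides for each $Y$ a section of the counit $\epsilon_Y\colon\mathbb{L}F(GY)\to Y$, exhibiting $Y$ as a direct summand of $\mathbb{L}F(GY)$. By the pigeonhole principle, infinitely many pairwise non-isomorphic $Y$ then occur as direct summands of a single $Z:=Z_j$.

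To reach a contradiction I would reduce to a bounded situation by truncation. Identifying $\mathbf{D}^-(B\mbox{-mod})$ with $\mathbf{K}^-(B\mbox{-proj})$, let $pY$ be the homotopically-minimal projective resolution of $Y$ and let $\widetilde{Z}$ be the homotopically-minimal model of $Z$, both bounded-above complexes of projective $B$-modules. The key point is that a homotopy direct summand of a homotopically-minimal complex is already a termwise direct summand: splitting the corresponding idempotent in the idempotent-complete category $\mathbf{K}^-(B\mbox{-proj})$ gives $\widetilde{Z}\simeq pY\oplus C'$ in $\mathbf{K}^-(B\mbox{-proj})$, and replacing $C'$ by its minimal model $C$ makes $pY\oplus C$ homotopically minimal, so the uniqueness in \cite[Proposition B.1]{CYZ} forces $\widetilde{Z}\cong pY\oplus C$ as complexes. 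Applying the brutal truncation $(-)_{\geq t}$, which carries termwise summands to termwise summands, yields that $(pY)_{\geq t}$ is a direct summand of $\widetilde{Z}_{\geq t}$ in $\mathbf{K}^b(B\mbox{-proj})$.

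Finally I would invoke finiteness. Since $H^i(Y)=0$ for $i\leq t$, Lemma~\ref{iso} shows that $Y$ is determined up to isomorphism by $(pY)_{\geq t}$, so infinitely many non-isomorphic $Y$ produce infinitely many non-isomorphic $(pY)_{\geq t}$. All of these are direct summands of the single bounded complex $\widetilde{Z}_{\geq t}$, contradicting the Krull--Schmidt property of $\mathbf{K}^b(B\mbox{-proj})$. I expect the main obstacle to be the termwise-summand reduction in the third paragraph: brutal truncation is not a functor on the homotopy category, so the argument genuinely needs the minimality of $\widetilde{Z}$, and this is precisely where the possible unboundedness below of $\mathbb{L}F(W_j)$ (when $B_A$ fails to be projective and $F$ is not exact) is absorbed.
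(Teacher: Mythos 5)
Your proof is correct, and its first half coincides with the paper's own argument: the contradiction setup, the observation that $\mathbf{D}^-(G)$ preserves cohomology dimension vectors, the appeal to derived-discreteness of $A$, the use of Lemma \ref{sepeq}(2) to exhibit each $Y$ as a direct summand of $\mathbb{L}F(GY)$, and the pigeonhole step producing a single $Z\in\mathbf{D}^-(B\mbox{-mod})$ carrying infinitely many pairwise non-isomorphic summands are all exactly as in the paper. You diverge only in the concluding truncation step. The paper stays inside the derived category: it applies the good truncation $\tau_{\geq t}$, which is an additive functor on $\mathbf{D}^-(B\mbox{-mod})$ and hence carries a summand decomposition of $Z$ to one of $\tau_{\geq t}Z$; since each relevant summand $X$ has vanishing cohomology below the truncation degree, $\tau_{\geq t}X\simeq X$, so $\tau_{\geq t}Z\in\mathbf{D}^b(B\mbox{-mod})$ inherits infinitely many pairwise non-isomorphic summands, contradicting the Krull--Schmidt property of $\mathbf{D}^b(B\mbox{-mod})$. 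You instead descend to homotopically-minimal resolutions, use uniqueness of minimal models to upgrade the homotopy summand to a termwise summand, apply brutal truncation, and then need Lemma \ref{iso} to see that the truncations stay pairwise non-isomorphic before invoking the Krull--Schmidt property of $\mathbf{K}^b(B\mbox{-proj})$. Both are valid; the paper's version is shorter precisely because $\tau_{\geq t}$, unlike brutal truncation, is well defined on $\mathbf{D}^-$ and commutes with finite direct sums, so no minimality considerations arise, whereas your version reuses the machinery (minimal resolutions, Lemma \ref{iso}) that the paper set up for Lemma \ref{spequ} and makes explicit where the possible unboundedness of the cohomology of $\mathbb{L}F(W_j)$ gets absorbed. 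One minor point: your appeal to idempotent-completeness of $\mathbf{K}^-(B\mbox{-proj})$ is true but is not something the paper supplies; it can be avoided altogether, since in any triangulated category a morphism $i\colon pY\rightarrow\widetilde{Z}$ admitting a retraction yields a split triangle, giving $\widetilde{Z}\simeq pY\oplus{\rm cone}(i)$ without splitting any idempotent.
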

\begin{proof}
	If $B$ is not derived-discrete, there is an $\underline{n}=(n_i)_{i\in\mathbb{Z}}\in\mathbb{N}^{(\mathbb{Z})}$ such that $$\{[X]\in\mathbf{D}^b(B\mbox{-mod})\,|\,\underline{\dim}_kX=\underline{n}\}$$ is an infinite set. Since $\underline{\dim}_k\mathbf{D}(G)(X)=\underline{\dim}_kX$ and $A$ is derived-discrete, $$\{[\mathbf{D}(G)(X)]\in\mathbf{D}^b(A\mbox{-mod})\,|\, \underline{\dim}_kX=\underline{n}\}$$ is a finite set. Then $$\{[\mathbb{L}F(\mathbf{D}(G)(X))]\in\mathbf{D}^-(B\mbox{-mod})\,|\,\underline{\dim}_kX=\underline{n}\}$$ is a finite set. 
	
	Since $\mathbf{D}(G)$ is separable, Lemma \ref{sepeq} implies that each $X\in\mathbf{D}^b(B\mbox{-mod})$ is a direct summand of $\mathbb{L}F(\mathbf{D}(G)(X))$ in $\mathbf{D}^-(B\mbox{-mod})$. Then the first and last sets above imply that there is an object $Y$ in $\mathbf{D}^-(B\mbox{-mod})$ with infinitely many pairwise non-isomorphic direct summands in the first set above. Let $t$ be the least number such that $n_t\neq0$. Denote by $\tau_{\geq t}Y\in\mathbf{D}^b(B\mbox{-mod})$ the good truncation of $Y$ at degree $t$. Each direct summand of $Y$ in the first set above is still a direct summand of $\tau_{\geq t}Y$. So $\tau_{\geq t}Y$ has infinitely many pairwise non-isomorphic direct summands. It is impossible since $\mathbf{D}^b(B\mbox{-mod})$ is Krull-Schmidt.
\end{proof}
\begin{rem}
	One can consider the derived Brauer-Thrall conjecture on algebra extensions; see \cite{LZ} for the field extension case. The above theorem may also hold for derived-tameness. If gl.dim$A<\infty$, it has been proved that the derived-tameness of $B$ implies that of $A$; see \cite[Theorem~3.1]{Z}.
\end{rem}
In (2) of the above theorem, the condition that $B$ is a projective right $A$-module is necessary. 

\begin{exm}\label{cntemp}
	Let $k$ be algebraically closed, and $Q$ be a quiver as
	$$\xymatrix@R=1ex{ & 2 \ar[dr]^b & \\1\ar[ur]^a\ar[dr]_c& &4.\\  &3\ar[ur]_d& }$$\\
	Consider the quotient $kQ/\langle ba\rangle\twoheadrightarrow kQ/\langle ba,dc\rangle$. It is a separable extension. We have that $kQ/\langle ba\rangle$ is derived-discrete. But $kQ/\langle ba,dc\rangle$ is iterated tilted of $\tilde{A}$ type, which is not derived-discrete; see \cite[2.1 and 2.2]{V}. In this case, $\mathbf{D}(G)$ is not separable, otherwise $kQ/\langle ba,dc\rangle$ is derived-discrete by the lemma above.
\end{exm}

\section{piecewise hereditary algbras and split/separable extensions}
We give an analogous statement for piecewise hereditary algebras in this section. For the field extension case, we have a refined result; see \cite{L}.

Recall that an algebra is called \emph{piecewise hereditary} of type $H$ if it is derived equivalent to $\mathbf{D}^b(H)$ for a hereditary abelian category $H$. When $k$ is algebraically closed, recall that one class of derived-discrete algebras is the piecewise hereditary algebras of Dynkin type. In view of Theorem \ref{main}, it is natural to  expect that piecewise hereditary algebras is compatible with split/separable extension.
\begin{prop}\label{ph}
	Let $\phi\colon A\rightarrow B$ be a $k$-algebra extension between two finite dimensional $k$-algebras. The following statements hold.\\
	{\rm (1)} If $\phi$ is a split extension and $B$ is piecewise hereditary, then $A$ is piecewise hereditary.\\
	{\rm (2)} If $\phi$ is a separable extension with $_AB$ a projective left $A$-module and $A$ is piecewise hereditary, then $B$ is piecewise hereditary.
\end{prop}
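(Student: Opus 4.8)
The plan is to transfer the homological invariant that governs piecewise heredity, namely the strong global dimension, along the two separable functors, in close parallel with the proof of Theorem \ref{main}. I will use the characterization (due to Happel and Zacharia) that a finite dimensional algebra is piecewise hereditary if and only if its strong global dimension is finite, where the latter is the supremum over all indecomposable $X$ in $\mathbf D^b(A\text{-mod})$ of the width $l(pX):=\max\{i:(pX)^i\neq0\}-\min\{i:(pX)^i\neq0\}$ of the homotopically-minimal projective resolution $pX$ (with the convention that this width is $\infty$ when $X$ is not perfect, so that finiteness of the strong global dimension already forces $\mathrm{gl.dim}\,A<\infty$). Two elementary facts will recur. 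First, $\mathbf K^b(F)$ sends $\mathbf K^b(A\text{-proj})$ into $\mathbf K^b(B\text{-proj})$ without increasing width, being $B\otimes_A-$ applied termwise, and $\mathbf K^b(G)$ does the same from $B$ to $A$ as soon as the restriction of a projective stays projective. Second, the width of a homotopically-minimal perfect complex $P$ with $H^i(P)=0$ outside $[a,b]$ is controlled by its cohomology: minimality bounds its top degree above by $b$, and, when $\mathrm{gl.dim}\,A<\infty$, bounds its bottom degree below by $a-\mathrm{gl.dim}\,A-1$, so $l(P)\le(b-a)+\mathrm{gl.dim}\,A+1$. This is the bookkeeping already present in Lemma \ref{bound}.

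For part (2) I expect a clean argument. The hypothesis that $_AB$ is projective makes two functors well behaved: $\mathbf K^b(G)$ sends $\mathbf K^b(B\text{-proj})$ into $\mathbf K^b(A\text{-proj})$ preserving support, and the coinduction $\mathrm{Hom}_A({}_AB,-)\colon A\text{-mod}\to B\text{-mod}$, being right adjoint to the exact functor $G$, is itself exact and carries injectives to injectives. Since $G$ is separable, Lemma \ref{sepeq} gives a splitting of the unit of $(G,\mathrm{Hom}_A({}_AB,-))$, so each $B$-module $Y$ is a summand of $\mathrm{Hom}_A({}_AB,{}_AY)$; hence $\mathrm{id}_B Y\le\mathrm{id}_A({}_AY)\le\mathrm{gl.dim}\,A<\infty$, giving $\mathrm{gl.dim}\,B<\infty$. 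For the width bound, take $Q$ indecomposable in $\mathbf K^b(B\text{-proj})$; by Lemma \ref{sep} the functor $\mathbf K^b(G)$ is separable, so $Q$ is a summand of $\mathbf K^b(F)\mathbf K^b(G)(Q)$. Decomposing $\mathbf K^b(G)(Q)\in\mathbf K^b(A\text{-proj})$ into $A$-indecomposables $D_j$, each of width at most $\mathrm{s.gl.dim}\,A$, and using that $\mathbf K^b(B\text{-proj})$ is Krull--Schmidt, $Q$ is a summand of some $\mathbf K^b(F)(D_{j_0})$, whose width does not exceed that of $D_{j_0}$. Thus $l(Q)\le\mathrm{s.gl.dim}\,A$, and $B$ is piecewise hereditary.

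For part (1) the width-transfer half is symmetric: as $F$ is separable, so is $\mathbf K^b(F)$, any indecomposable $P\in\mathbf K^b(A\text{-proj})$ is a summand of $\mathbf K^b(G)\mathbf K^b(F)(P)$, and decomposing $\mathbf K^b(F)(P)$ into $B$-indecomposables of bounded width together with Krull--Schmidt shows $P$ is a summand of $\mathbf K^b(G)(C_{j_0})$ for a single narrow $B$-indecomposable $C_{j_0}$. Since $G$ is exact this bounds the \emph{cohomology} window of $P$ by $\mathrm{s.gl.dim}\,B$; combined with the width estimate above, $l(P)$ is then bounded, \emph{provided $\mathrm{gl.dim}\,A<\infty$ is already known}. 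So the whole argument reduces to establishing finiteness of $\mathrm{gl.dim}\,A$.

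This reduction is the main obstacle, and it is exactly where (1) differs from (2). A split extension provides no projectivity of $B$ as a one-sided $A$-module, so the restriction of a projective $B$-module need not be $A$-projective; consequently neither the coinduction argument of part (2) nor boundedness of the derived induction $\mathbb LF=B\otimes^{\mathbb L}_A-$ is available, and indeed $B$ may have infinite flat dimension over $A$, so $\mathbb LF(X)$ can be unbounded. My plan to circumvent this is a cone construction in $\mathbf D^b$: if $\mathrm{gl.dim}\,A=\infty$ then $\mathrm{Ext}^n_A(S,T)\neq0$ for simple $S,T$ and arbitrarily large $n$, and a nonzero class in $\mathrm{Ext}^n_A(S,T)=\mathrm{Hom}_{\mathbf D^b(A)}(S,T[n])$ has an indecomposable cone $C$ with cohomology $T$ in degree $-n$ and $S$ in degree $-1$, hence window $n-1$. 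Feeding $C$ through the separable functor $\mathbf D^-(G)\mathbb LF$ as above would force the window of $C$ to be at most $\mathrm{s.gl.dim}\,B$, a contradiction for large $n$, and hence $\mathrm{gl.dim}\,A<\infty$. The delicate point, which I expect to be the crux, is justifying that $\mathbb LF(C)$ decomposes into \emph{finitely many} $B$-indecomposables of bounded window; this needs $\mathbb LF(C)$ to be bounded, a finite Tor-dimension statement for the two-cohomology object $C$ that the hypotheses of (1) do not supply directly and that must be wrung out of the piecewise heredity of $B$ (or from an extra finiteness of $B$ over $A$).
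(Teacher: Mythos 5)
Your part (2) is essentially the paper's own argument and is correct: since $_AB$ is projective, restriction sends projectives to projectives, separability of $G$ makes each indecomposable $Q\in\mathbf{K}^b(B\mbox{-proj})$ a summand of $\mathbf{K}^b(F)\mathbf{K}^b(G)(Q)$, and Krull--Schmidt plus the fact that $F$ applied termwise does not increase length gives $l(Q)\leq\mbox{s.gl.}\dim A$ (your additional coinduction argument for $\mathrm{gl.dim}\,B<\infty$ is harmless but unnecessary). Part (1), however, has a genuine gap, and it arises exactly where you discard the decisive piece of information. After concluding that $P$ is a summand of $\mathbf{K}^b(G)(C_{j_0})$ for a single $B$-indecomposable $C_{j_0}$ with $l(C_{j_0})\leq\mbox{s.gl.}\dim B$, you keep only the cohomological consequence (``since $G$ is exact this bounds the \emph{cohomology} window of $P$''). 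The point you miss is that the restriction functor does not change the underlying complex at all: $\mathbf{K}^b(G)(C_{j_0})$ is the same graded object with the $A$-action pulled back along $\phi$, so $l(\mathbf{K}^b(G)(C_{j_0}))=l(C_{j_0})\leq\mbox{s.gl.}\dim B$ as a bound on the \emph{terms}, not merely on cohomology. Now a direct summand in $\mathbf{K}^b(A\mbox{-mod})$ of a complex concentrated in degrees $[a,b]$ has a homotopically minimal representative concentrated in $[a,b]$: the minimal model of the ambient complex is a degreewise summand of it, and minimal models of homotopy equivalent complexes are isomorphic as complexes, so the minimal model of $P$ (which still has projective terms) sits degreewise inside it. Hence $l(P)\leq\mbox{s.gl.}\dim B$ directly, with no reference whatsoever to $\mathrm{gl.dim}\,A$. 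This is precisely how the paper argues.

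Because of this, the entire second half of your part (1) --- the reduction to proving $\mathrm{gl.dim}\,A<\infty$ and the proposed cone construction --- is not only superfluous but also, as you yourself concede, not completable with the stated hypotheses: a split extension gives no one-sided projectivity or finite flat dimension of $B$ over $A$, so $\mathbb{L}F(C)$ of your two-cohomology complex $C$ need not lie in $\mathbf{D}^b(B\mbox{-mod})$, it need not decompose into finitely many indecomposables of bounded width, and the desired contradiction cannot be extracted. So as written your proof of (1) is incomplete; the repair is the width-transfer observation above, namely $l(\mathbf{K}^b(G)\mathbf{K}^b(F)(P))=l(\mathbf{K}^b(F)(P))$ combined with the fact that passing to direct summands in the Krull--Schmidt category $\mathbf{K}^b(A\mbox{-mod})$ cannot increase the (minimal) length.
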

Recall that the \textit{strong global dimension} of a $k$-algebra $A$, denoted by $\mbox{s.gl.}\dim A$, is given by $$\sup\{l(P)\,|\,0\neq P\in\mathbf{K}^b(A\mbox{-proj})\text{ indecomposable and homotopically-minamal}\},$$ where $l(P)=\min\{b-a\,|\,a,b\in\mathbb{Z},b\geq a,\mbox{ and } P^i=0\mbox{ for }i<a\mbox{ and }i>b\}$ is the length of $P\neq0$.

We have a homological characterization of piecewise hereditary algebras saying that $A$ is piecewise hereditary if and only if $\mbox{s.gl.}\dim A$ is finite; see \cite[Theorem~3.2]{H}.
\begin{proof}
	For (1), we claim that $\mbox{s.gl.}\dim A\leq\mbox{s.gl.}\dim B$. Indeed, for each indecomposable $P$ in $\mathbf{K}^b(A\mbox{-proj})$, by Lemma \ref{sepeq}, $P$ is a direct summand of $\mathbf{K}(G)(\mathbf{K}(F)(P))$ in $\mathbf{K}^b(A\mbox{-mod})$. The length of each direct summand of $\mathbf{K}(G)(P)$ in $\mathbf{K}^b(B\mbox{-proj})$ is not larger than $\mbox{s.gl.}\dim B$. As $l(\mathbf{K}(G) (\mathbf{K}(F)(P)))=l(\mathbf{K}(F)(P))$ and each indecomposable direct summand will not have larger length, we have that $l(P)\leq\mbox{s.gl.}\dim B$.
	
	For (2), the condition that $B$ is a projective left $A$-module makes $G$ sending projectives to projectives. So we can prove that $\mbox{s.gl.}\dim B\leq\mbox{s.gl.}\dim A$ similarly as above.
\end{proof}

In the proof of Proposition \ref{ph} (2), the condition that $_AB$ is projective is necessary.

\begin{exm}
	Let $k$ be algebraically closed, and $Q$ be a quiver as
	$$\xymatrix@R=1ex{ & 2 \ar[dr]^b & \\1\ar[ur]^a\ar[dr]_c& &4\\  &3\ar[ur]_d& }$$\\
    Consider the quotient $kQ\twoheadrightarrow kQ/\langle ba\rangle$, which is a separable algebra extension. We have that $kQ$ is (piecewise) hereditary, but $kQ/\langle ba\rangle$ is derived-discrete but not piecewise hereditary according to the classification of derived-discrete algebras in \cite{V} (we recall it in the next section).
\end{exm}

\section{Applications}
We give two applications of the results in Section 4 and Section 5 for field extensions and skew group algebra extensions.

We recall from \cite[2.1 Theorem]{V} the classification of derived-discrete algebras over an algebraically closed field $k$: a connected $k$-algebra $A$ is derived-discrete over $k$ if and only if $A$ is either piecewise hereditary of Dynkin type or $A$ is Morita equivalent to $kQ/I$ such that $kQ/I$ is gentle one-cycle with the clock condition, that is, $kQ/I$ is a gentle algebra containing exactly one cycle, and in the cycle the number of clockwise oriented relations does not equal the number of counterclockwise oriented relations. 

\begin{prop}
	Let $K/k$ be a finite separable field extension and $A$ be a finite dimensional $k$-algebra. Then the following statements hold.\\
	{\rm (1)} The algebra $A$ is derived-discrete over $k$ if and only if $A\otimes_kK$ is derived-discrete over $K$. \\
	{\rm (2)} The algebra $A$ is piecewise hereditary if and only if so is $A\otimes_kK$.
\end{prop}
\begin{proof}
	By Example \ref{exm} 3), the extension $A\rightarrow A\otimes_kK$ is both split and separable and $A\otimes_kK$ is a left and right projective $A$-module. 
	
	(1) By Theorem \ref{main}, $A$ is derived-discrete over $k$ if and only if $A\otimes_kK$ is derived-discrete over $k$. By Lemma 2.3, $A\otimes_kK$ is derived-discrete over $k$ if and only if $A\otimes_kK$ is derived-discrete over $K$. 
	
	(2) By Proposition \ref{ph}.
\end{proof}

Let $A$ be a finite dimensional algebra over an algebraically closed field $k$. Assume that $G$ is a finite group acting on $A$ with its order $|G|$ invertible in $k$. The algebra extension from $A$ to its skew group algebra $AG$ is both split and separable extension with $AG$ a both left and right projective $A$-module; see Example~\ref{exm} 2).

\begin{prop}\label{last}
	Let $A$ be a connected algebra and $A\rightarrow AG$ be a skew group extension as above. Then the following statements hold.\\
	{\rm (1)} The algebra $A$ is derived-discrete if and only if so is $AG$.\\
	{\rm (2)} The algebra $A$ is piecewise hereditary of Dynkin type if and only if so is each connected component of $AG$.\\
	{\rm (3)} The algebra $A$ is Morita equivalent to a gentle one-cycle algebra with the clock condition if and only if so is each connected component of $AG$.
\end{prop}
\begin{proof}
	(1) is a consequence of Theorem \ref{main}. 
	
	An algebra is derived-discrete (or piecewise hereditary) if and only if so are its connected components. Notice that a gentle one-cycle algebra with the clock condition is not piecewise hereditary; see \cite{V}.
	
	For the ``only if'' part of (2). By the classification of derived-discrete algebras, if $A$ is piecewise hereditary of Dynkin type, then $A$ is derived-discrete. Hence each connected component of $AG$ is derived-discrete and piecewise hereditary by (1) and Proposition~\ref{ph}. Therefore, it must be piecewise hereditary of Dynkin type. 
	
	The ``if'' part of (2) and statement (3) can be proved in a similar argument.
\end{proof}

\vskip 10pt

\noindent {\bf Acknowledgements.}\quad The author is grateful to Prof. Xiao-Wu Chen and Chao Zhang for their suggestions. He also thanks the referees. This work is supported by the National Natural Science Foundation of China (No.12171297 and No.12201166).

\bibliography{}

\vskip 10pt

{\footnotesize \noindent Jie Li\\
	School of Mathematics, Hefei University of Technology, Hefei 230000, Anhui, PR China}
\end{document}